\newcommand{\Ocal}{\mathcal{O}}
\newcommand{\Fcal}{\mathcal{F}}
\newcommand{\Gcal}{\mathcal{G}}
\newcommand{\wt}{\widetilde}
\renewcommand{\P}{\mathbb{P}}
\newcommand{\C}{\mathbb{C}}
\newcommand{\euler}{e}
\newcommand{\Op}{\Ocal_p}
\DeclareMathOperator{\I}{I}
\DeclareMathOperator{\sing}{Sing}
\DeclareMathOperator{\codim}{codim}
\newtheorem{theorem}{Theorem}[section]
\newtheorem{lemma}[theorem]{Lemma}
\newtheorem{proposition}[theorem]{Proposition}
\newtheorem{thm}[theorem]{Theorem}
\newtheorem{cor}[theorem]{Corollary}
\theoremstyle{definition}
\newtheorem{remark}[theorem]{Remark}
\newtheorem{example}[theorem]{Example}
\theoremstyle{plain}
\font\smallrm=cmr8
\begin{document}

\author[{\smallrm{}T.~Fassarella and N.~Medeiros}]
{Thiago Fassarella \and Nivaldo Medeiros\\
}

\thanks{The authors are partially supported by 
FAPERJ, Proc. E-26111.849/2010 and E-26102.769/2008.}

\title[{\smallrm{}On the polar degree of hypersurfaces}]
{On the polar degree of projective hypersurfaces}

\begin{abstract}
Given a hypersurface in the complex projective $n$-space we prove several known formulas for the
degree of its polar map by purely algebro-geometric methods. Furthermore, we
give formulas for the degree of its polar map in terms  of the degrees of the polar maps of its components.
As an application, we classify the plane curves with polar map of low degree, 
including a very simple proof of I. Dolgachev's classification of homaloidal plane
curves.
\end{abstract}
\maketitle

\section{Introduction}

Let $f$ be a homogeneous polynomial in $n+1$ variables defined over the field of complex numbers.
In this note we analyze the
relationship
between the geometry of its zero scheme $D\subset\P^n$  and properties of its
\emph{polar map} $\nabla f\colon\P^n\dashrightarrow \P^n$, given by its partial derivatives.
Easy cases are well understood: the hypersurface
$D$ is smooth if and only if the polar map is a morphism;
and $D$ has a non-vanishing Hessian if and only if
$\nabla f$ is dominant.
Nevertheless, there are classical questions still
waiting for a satisfactory answer. For example, one asks for a classification of \emph{homaloidal hypersurfaces},
that is, those whose polar map is birational. This is a natural problem 
which has received a lot of attention recently.
We give a quick survey of its status.

A basic result towards the classification is that the answer depends
only on the topology of its zero set. To be precise, if we let $d_{t}(D)$ or
$d_t(f)$ denote the \emph{polar degree} of $f$, defined as the
topological degree of its polar map, then
$d_t(f)=d_t(f_{\mathrm{red}})$ (\cite[Cor.~2]{DP}, \cite{FP}). So,
for classification purposes, we may assume all hypersurfaces are
reduced.

The plane case has been settled by I.~Dolgachev and the complete
list is quite neat \cite[Thm.~4]{Do}: a reduced homaloidal plane
curve must be either the union of three non-concurrent lines; or a
smooth conic; or the union of a smooth conic and a tangent line. On
the other hand, the picture for higher dimensions is completely
different: as it has been shown by C.~Ciliberto, F.~Russo and
A.~Simis, there are irreducible homaloidal hypersurfaces of any
degree $\geq 2n-3$ for every $n\geq 3$ \cite[Thm.~3.13]{CRS}.
Interestingly enough, all these examples exhibit a common feature,
to wit, a complicated singular locus, usually non-reduced. Actually,
A. Dimca conjectured that do not exist homaloidal hypersurfaces of
degree at least three with only \emph{isolated singularities} whenever $n\geq 3$.
There are some results giving plausibility to this, such as
\cite[Thm.~9]{Di},  \cite[Prop.~3.6]{CRS} and \cite[Cor.~3.5]{Ah}.

A bit more ambitiously, one may ask for formulas for the polar
degree. And as we shall see in the sequel, there are plenty. For
starters, we associate to a hypersurface $D$ of degree $k$ a foliation $\Fcal$, now in
$\P^{n+1}$, simply by taking the pencil generated by $f$ and $x_{n+1}^k$. Next,
consider the \emph{Gauss map} $\Gcal_\Fcal\colon
\P^{n+1}\dashrightarrow  \check{\P}^{n+1}$ of the foliation,
$p\mapsto T_p\Fcal$. The upshot is that the maps $\nabla f$ and
$\Gcal_\Fcal$ have the same degree, that is,
\begin{equation}
\label{eq:00}
 d_t(D) = d_t(\Gcal_\Fcal).
\end{equation}
This has already been shown in \cite{FP}. With hindsight, we realized that all polar degree formulas known to us can be derived from \eqref{eq:00} by purely algebro-geometric methods, often with simpler proofs.
One of the aims of this note is to
show how this can be done.

Furthermore, it would be interesting to have formulas expressing the
polar degree of a hypersurface in terms of that of its components,
for this may help to reduce the classification problem to
irreducible hypersurfaces. Such formulas actually do exist, as we
shall prove below.

\medskip
Let us describe the contents of this paper. We state our main results along the way.

\medskip
In Section~2 we outline the basic theory of holomorphic foliations needed for the
our purposes. We give a simple proof of \eqref{eq:00} and from it we prove in
Proposition~\ref{P:milnor} the first formula for the polar degree: 
\emph{given a reduced hypersurface $D\subset \P^n$ of degree $k$ with only isolated singularities},
\begin{equation}
\label{eq:01}
\textstyle
d_t(D) = (k-1)^n - \sum \mu_p
\end{equation}
where $\mu_p$ is the \emph{Milnor number} of the singularity.

\medskip
Section~3 is devoted to the classification of plane polar maps with low degree.
From identity~\eqref{eq:01} and elementary properties of the Milnor number we
obtain in Theorem~\ref{thm:PF1} a formula for
the polar degree in terms of its subcurves:
\emph{given reduced curves $C_1,C_2\subset \P^2$ with
no common components,}
\begin{equation}
\label{eq:02}
d_t(C_1\cup C_2) = d_t(C_1) + d_t(C_2) + \#(C_1\cap C_2)-1.
\end{equation}
We emphasize that the intersection points in \eqref{eq:02} are
counted \emph{without} multiplicities. This seems surprising at
first sight, but ties up with the general philosophy that the polar degree
depends more on the topology than on the algebra. From this Dolgachev's classification of homaloidal plane curves follows
easily, see Theorem~\ref{thm:deg1}. Next, we
list all reduced curves with polar degree two
(exactly 9 types) and three (exactly 31 types), see Theorems~\ref{thm:deg2}
and \ref{thm:deg3}.

\medskip
Back to the general case, we start Section~4 by showing that the
polar degree can also be given as the top Chern
class of the bundle of logarithmic differentials of a resolution of $D$;
precisely, we prove in Proposition~\ref{prop:Cherntop}:
\emph{let $\pi\colon X \to \P^n$ be an
embedded resolution of singularities of $D$. Then, for a generic hyperplane $H\subset\P^n$,}
\begin{equation}
\label{eq:03}
\textstyle
 d_t(D) = \int_X c(\Omega^1_X(\log \pi^*(D+H))).
\end{equation}
Again, this can be derived directly from \eqref{eq:00}, as it has been already noted
in \cite{FP}. Now, by taking into account that there is a ``Gauss-Bonnet theorem
for a complement of a divisor'' (see Remark~\ref{rmk:GB}), we obtain in
Corollary~\ref{P:dt2} an algebraic proof of a nice formula
by Dimca and Papadima \cite[Thm.~1]{DP}
relating the polar degree and the topological Euler characteristic
of the complement of a generic hyperplane section:
\begin{equation}
\label{eq:05}
d_t(D) = (-1)^n (1-\euler(D\setminus H)).
\end{equation}
This allows one to compute the polar degree
effectively, thanks to P.~Aluffi's algorithm for the computation of
the Euler characteristic via Chern-Schwartz-MacPherson classes \cite{Alu03}, implemented in \textsc{Macaulay2} \cite{M2}.

With identity \eqref{eq:05} at hand, a straightforward use of inclusion-exclusion
principle for the Euler characteristic yields a generalization of \eqref{eq:02}:
\emph{given $D_1,D_2$ any hypersurfaces in $\P^n$},
\begin{equation}
\label{eq:06}
 d_t(D_1\cup D_2) = d_t(D_1) + d_t(D_2) + (-1)^n(\euler(D_1\cap D_2\setminus H)-1).
\end{equation}
In fact, we have more general versions of formulas
\eqref{eq:05} and \eqref{eq:06}, concerning the \emph{projective 
degrees} of the polar map; see Corollary~\ref{P:dt2} for more
details. Notice that \eqref{eq:02} follows immediately from \eqref{eq:06}, but
our first proof is more elementary.

\medskip
Finally, in Section~5, we give some applications.
Building on examples given in \cite[Thm.~3.13]{CRS}
and inspired by formula~\eqref{eq:06},
we show in Example~\ref{dtiguais} there are reduced homaloidal hypersurfaces in $\P^n$ of any degree $\geq 2$ for every $n\geq3$,
thus eliminating the bound $2n-3$ mentioned above.
There is a catch, though: in contrast with theirs, our examples
are reducible.

The polar degree
of hypersurfaces in $\P^n$ with normal crossings is computed in Example~\ref{ex:normalcrossings}; by specializing to hyperplanes,
we compute in Example~\ref{hyperplanes} the projective degrees of the standard Cremona transformation of $\P^n$ for any value of $n$, recovering a result proved by G.~Gonzalez-Sprinberg and I.~Pan in \cite[Thm.~2]{GP}.

Our last application is given in Example~\ref{ex:Bruno}. 
Once more helped by \eqref{eq:06}, we present a somewhat short proof of one of the 
main results of \cite{Bruno}: \emph{a collection of \emph{$r$} 
distinct hyperplanes in $\P^n$ is homaloidal if and only if $r=n+1$ and they are in 
general position}.
 
\bigskip
\noindent\textsc{Acknowledgments}.
We owe a great deal to Jorge V. Pereira for sharing many of his ideas and for
pointing out some helpful references. 
We thank Giuseppe Borrelli, Eduardo Esteves,
Marco Pacini, Ivan Pan  and Israel Vainsencher for valuable discussions
on the subject.

\section{From foliations to polar maps}
A \emph{codimension one singular holomorphic foliation}, from now on
just a foliation, $\mathcal F$ on $\mathbb P^n$ is
determined by a line bundle $\mathcal L$ and an element $\omega \in
\mathrm H^0(\mathbb P^n, \Omega^1_{\mathbb P^n} \otimes \mathcal L)$ satisfying

\begin{enumerate}
\item[(i)] $\codim \sing(\omega) \ge 2$ where $\sing(\omega)
= \{ x \in \mathbb P^n \mid \omega(x) = 0 \}$;
\item[(ii)] $\omega \wedge d\omega =0$ in $\mathrm H^0(\mathbb P^n,\Omega^3_{\mathbb P^n} \otimes \mathcal
L^{\otimes 2}).$
\end{enumerate}

The \emph{singular set} of $\mathcal F$, for short $\sing(\mathcal
F)$, is by definition equal to  $\sing(\omega)$. By Frobenius' Theorem, the integrability
condition (ii) determines in an analytic neighborhood of every point
$p \in \mathbb P^n \setminus \sing(\mathcal F)$ a holomorphic fibration of codimension one with
relative tangent sheaf coinciding with the subsheaf of $T\mathbb P^n$
determined by the kernel of $\omega$. Analytic continuation of the
fibers of this fibration describes the leaves of $\mathcal F$.

One of the most basic invariants attached to an isolated singularity of a foliation $\mathcal F$ is its
\emph{multiplicity} $\mu_p(\mathcal F)$, defined as the intersection
multiplicity at $p$ of the zero section of $\Omega^1_{\mathbb P^n}\otimes
\mathcal L$ with the graph of $\omega$. Thus, if $\omega_p=\sum_{i=1}^{n}a_idx_i$ is a local $1$--form defining $\mathcal F$ in a neighborhood of $p$, then
\[
\mu_p(\mathcal F)=\dim_\C\frac{\mathcal O_p}{(a_1,\dots,a_n)}.
\]

The \emph{degree} of  a foliation of $\mathbb P^n$ is geometrically defined as
the number of tangencies of $\mathcal F$ with a generic line $\P^{1}
\subset \mathbb P^n$. If $\iota\colon \P^{1} \to \mathbb P^n$ is the
inclusion of such a line, then the degree of $\mathcal F$ is the
degree of the zero divisor of the twisted $1$-form  $\iota^*\omega
\in \mathrm H^0(\mathbb \P^{1}, \Omega^1_{\P^{1}} \otimes \mathcal L_{|
\P^{1}})$. Since $\Omega^1_{\P^{1}}=\mathcal O_{\P^{1}}(-2)$  the degree of $\mathcal F$ is just $\deg(\mathcal L)-2$.

\medskip

It follows from  Euler sequence that a $1$-form $\omega \in
\mathrm H^0(\mathbb P^n ,\Omega_{\P^{n}}^1 ( \deg(\mathcal F) + 2) )$ can be
interpreted as  a homogeneous $1$-form on $\mathbb C^{n+1}$, still
denoted by $\omega$,
\[
\omega = \sum_{i=0}^n A_i dx_i
\]
with the $A_i$ being homogeneous polynomials of degree
$\deg(\mathcal F) + 1$ and satisfying Euler's relation $ i_R \omega
= 0,$ where  $i_R$ stands for the interior product with the radial vector field 
$R = \sum_{i=0}^n x_i \frac{\partial}{\partial x_i}$.

The \emph{Gauss map} of a foliation $\mathcal F$ of $\mathbb P^n$ is
the rational map
\begin{eqnarray*}
\mathcal G_{\mathcal F} \colon \mathbb P^n &\dashrightarrow& \check
{\mathbb P}^n \,  \\
p &\mapsto& T_p \mathcal F
\end{eqnarray*}
where $T_p \mathcal F$ is the tangent space of the leaf
of $\mathcal F$ through $p$. If we interpret $(dx_0: \cdots : dx_n)$ as projective coordinates of
$\check{\mathbb P}^n$, then the Gauss map of the foliation $\Fcal$ is just the rational map $p\mapsto \left( A_{0}(p):\cdots : A_{n}(p)\right).$

Let $H=\iota(\P^{n-1})\subset \P^{n}$ be a hyperplane given by the inclusion $\iota\colon \P^{n-1}\to \P^{n}$. 
If $\iota^{*}(\omega)$ is identically zero, we say that $H$ is invariant by $\mathcal F$;
otherwise, after dividing the $1$--form $\iota^{*}(\omega)$ by a codimension one singular set if necessary, 
we consider the restriction $\mathcal F_{|H}$ as the foliation defined by this $1$--form. The following well-known lemma (cf. \cite{CLS}), 
which follows from Sard's Theorem applied to  $\mathcal G_{\mathcal F}$, will 
be useful to obtain some information for  the topological degree of $\mathcal G_{\mathcal F}$.

\begin{lemma}\label{L:CLS}
If $H \subset \mathbb P^n$ is a generic hyperplane and $\mathcal F$
is a foliation on $\mathbb P^n$, then the degree of
$\mathcal F_{|H}$ is equal to the degree of $\mathcal F$ and, moreover,
\[
\sing(\mathcal F_{|H}) = (\sing(\mathcal F)\cap H ) \cup \mathcal
G_{\mathcal F}^{-1}(H)
\]
with $ \mathcal G_{\mathcal F}^{-1}(H)$ being finite and all the
corresponding singularities of $\mathcal F_{|H}$ have multiplicity
one.
\end{lemma}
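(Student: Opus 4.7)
\medskip
\noindent\textbf{Proof Plan.}
The strategy is to reduce each assertion to a local coordinate computation and then invoke Sard's theorem to absorb the genericity of $H$. After a linear change of coordinates I may assume $H=\{x_0=0\}$. The subset of $\mathcal F$-invariant hyperplanes is closed and proper in $\check{\P}^n$ (if every hyperplane were invariant, varying $H$ through a pencil covering a neighborhood of a point $p\notin\sing(\mathcal F)$ would force $\omega(p)=0$), so for generic $H$ the restriction $\iota^*\omega=\sum_{i=1}^{n}A_i|_H\,dx_i$ is a nonzero twisted $1$-form on $H$. The equality $\deg(\mathcal F_{|H})=\deg(\mathcal F)$ then follows from the conormal exact sequence $0\to N^*_{H/\P^n}\to\Omega^1_{\P^n}|_H\to\Omega^1_H\to 0$: the projection of $\iota^*\omega$ to $\Omega^1_H\otimes\mathcal L|_H$ preserves the twist, so the foliation-degree formula applied on $H\simeq\P^{n-1}$ gives $\deg(\mathcal F_{|H})=\deg\mathcal L-2=\deg(\mathcal F)$, once one observes that saturation is trivial for generic $H$ (as $\omega$ is itself saturated on $\P^n$).

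The set-theoretic description of $\sing(\mathcal F_{|H})$ is then immediate from the local expression: $p\in H$ lies in $\sing(\mathcal F_{|H})$ iff $A_i(p)=0$ for every $i=1,\dots,n$. If additionally $A_0(p)=0$ then $p\in\sing(\mathcal F)\cap H$; otherwise $\mathcal G_\mathcal F(p)=[1\!:\!0\!:\!\cdots\!:\!0]$, the point of $\check{\P}^n$ representing $H$, so $p\in\mathcal G_\mathcal F^{-1}(H)$. Finiteness of $\mathcal G_\mathcal F^{-1}(H)$ follows directly from Sard's theorem applied to $\mathcal G_\mathcal F\colon\P^n\dashrightarrow\check{\P}^n$: either the map is not dominant, in which case the preimage of a generic $H$ is empty, or it is dominant between $n$-dimensional varieties and the generic fiber is $0$-dimensional.

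The main obstacle is the multiplicity-one claim. For $p\in\mathcal G_\mathcal F^{-1}(H)\setminus\sing(\mathcal F)$ I would pass to an affine chart adapted to $p$, say $x_1=1$ with $p$ at the origin, in which $\omega$ reads $a_0\,dz_0+\sum_{i\ge 2}a_i\,dz_i$ with $a_0(p)\neq 0$ and $a_i(p)=0$ for $i\ge 2$, and $H=\{z_0=0\}$. Then
\[
\mu_p(\mathcal F_{|H})=\dim_{\C}\mathcal O_{H,p}/\bigl(a_2(0,\mathbf z),\dots,a_n(0,\mathbf z)\bigr),
\]
which equals $1$ precisely when the $(n-1)\times(n-1)$ Jacobian $\bigl(\partial a_i/\partial z_k(p)\bigr)_{i,k\ge 2}$ is invertible. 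Using Euler's relation to express the missing $A_1$-component of $\omega$ as $-z_0a_0-\sum_{k\ge 2}z_k a_k$, a direct differentiation shows that the Jacobian of $\mathcal G_\mathcal F$ at $p$, computed in the affine chart of $\check{\P}^n$ centered at $H$, is block-triangular, with a $-1$ contributed by the $A_1/A_0$ coordinate and the above $(n-1)\times(n-1)$ matrix as the other block. Hence multiplicity one at $p$ is equivalent to $\mathcal G_\mathcal F$ being étale at $p$, and a final application of Sard's theorem in the dominant case ensures that for generic $H$ in the image of $\mathcal G_\mathcal F$ every preimage is étale, completing the plan.
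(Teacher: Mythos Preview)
Your proposal is correct and aligns with the paper's indicated approach: the paper does not actually supply a proof of this lemma, but merely states that it ``follows from Sard's Theorem applied to $\mathcal G_{\mathcal F}$'' and cites \cite{CLS}. Your argument is precisely a detailed unpacking of that hint---the degree equality via the conormal sequence and absence of codimension-one saturation, the set-theoretic description of $\sing(\mathcal F_{|H})$ via Euler's relation, and the crucial identification of $\mu_p(\mathcal F_{|H})=1$ with \'etaleness of $\mathcal G_{\mathcal F}$ at $p$ through the block-triangular Jacobian---so there is nothing to contrast.
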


Now let $D\subset \P^n$ be a hypersurface given
by a homogeneous polynomial $f \in \C[x_0,\dots,x_n]$ of degree $k$.
We denote by $d_t(D)$ or $d_t(f)$ its \emph{polar degree}, defined as
the topological degree of its \emph{polar map}
\begin{eqnarray*}
\nabla f\colon  \P^n &  \dashrightarrow & \P^n\\
 x &\mapsto&  (f_{x_0}(x):\cdots:f_{x_n}(x)).
\end{eqnarray*}
Since the polar degree
depends only on the zero locus of $f$ (cf. \cite{DP,FP}) we may suppose $f$ reduced.

We associate to this hypersurface a foliation $\mathcal F$ in $\mathbb P^{n+1}$
defined by the pencil generated by $f$ and $x_{n+1}^k$, that is, the foliation induced by the $1$-form
\[
\omega = x_{n+1}df - kfdx_{n+1}.
\]

\begin{remark}\label{R:polardegree}
Let $\mathcal G_{\mathcal F}$ be the Gauss map associated to this foliation
\begin{eqnarray*}
\mathcal G_{\mathcal F} \colon  \P^{n+1} &  \dashrightarrow & \P^{n+1}\\
 x &\mapsto&  (x_{n+1}f_{x_0}(x):\cdots:x_{n+1}f_{x_n}(x):-kf(x)).
\end{eqnarray*}
If  $\rho(x_0: \cdots : x_n : x_{n+1}) = (x_0 : \cdots : x_n)$ is the projection with center at $p=(0:\cdots:0:1)$, we see that the rational maps $\mathcal G_{\mathcal F}$ and $\nabla
 f$ fit  in the commutative diagram below.
\[
\xymatrix { \mathbb P^{n+1} \ar@{-->}[d]_{\rho}
\ar@{-->}[rrr]^{\mathcal G_{\mathcal F}} &&& {\mathbb P}^{n+1}
 \ar@{-->}[d]_{{\rho}}\\
\mathbb P^n \ar@{-->}[rrr]^{\nabla f} &&&
{\mathbb P}^n}
\]
A simple computation shows that the restriction of $\rho$ to a fiber of $\mathcal G_{\mathcal F}$ induces an isomorphism to the corresponding fiber of $\nabla f$ and so
their topological degrees  coincide, that is, $d_t(f)=d_t(\mathcal G_{\mathcal F})$.
This is a particular case of  \cite[Thm.~2]{FP} where higher degrees are also considered.
\end{remark}

With this at hand we are able to recover a formula for the polar
degree of hypersurfaces with only isolated singularities.
The main invariant to be considered here is the \emph{Milnor number}
\[
\mu_p(f)=\dim_\C\frac{\mathcal O_p}{(\tilde{f}_{x_1},\dotsc,\tilde{f}_{x_n})}
\]
where $\tilde{f}$ is
the germ of $f$ at $p$.

\begin{proposition}
\label{P:milnor}
Let  $D\subset \P^n$ be a hypersurface with isolated singularities, 
given by a reduced polynomial $f \in \C[x_0,\dots,x_n]$ of degree $k$. Then
\[
\textstyle
 d_t(D)=(k-1)^n - \sum \mu_p(f).
\]
\end{proposition}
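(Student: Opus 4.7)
The plan is to prove the formula by a direct Bezout computation on the polar map $\nabla f\colon\P^n\dashrightarrow\P^n$. Since $D$ has only isolated singularities, the base locus of $\nabla f$---cut out by the common vanishing of $f_{x_0},\ldots,f_{x_n}$---is precisely the finite set $\sing(D)$.

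For a generic $q=(q_0{:}\cdots{:}q_n)\in\P^n$, the fiber $(\nabla f)^{-1}(q)$ is cut out by the $n$ hypersurfaces $H_j=\{q_0 f_{x_j}-q_j f_{x_0}=0\}$, each of degree $k-1$. By Bezout, the ambient intersection $H_1\cap\cdots\cap H_n$ has total length $(k-1)^n$; off the base locus it consists of $d_t(D)$ generically reduced points (since $\nabla f$ is generically \'etale onto its image, the Hessian of $f$ being not identically zero), and the remainder is supported on $\sing(D)$. Thus
\[
d_t(D)=(k-1)^n-\sum_{p\in\sing(D)} i_p(H_1,\ldots,H_n).
\]

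The core local step is to identify $i_p$ with $\mu_p(f)$. Dehomogenizing by $x_0=1$ near $p$ and writing $g=f(1,x_1,\ldots,x_n)$, Euler's relation $f_{x_0}|_{x_0=1}=kg-\sum x_i g_{x_i}$ rewrites each $H_j$ locally as a generic $\C$-linear combination of $g,g_{x_1},\ldots,g_{x_n}$ in $\Op$. A deformation argument---connecting these generic combinations to generic elements of the Milnor ideal $(g_{x_1},\ldots,g_{x_n})$ through a $1$-parameter family with constant colength---then gives that the colength of the ideal they generate equals $\dim_\C\Op/(g_{x_1},\ldots,g_{x_n})=\mu_p(f)$. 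Equivalently, one may invoke the foliation setup of Remark~\ref{R:polardegree}: after restricting $\mathcal{F}$ to a generic hyperplane $H\subset\P^{n+1}$ and applying Lemma~\ref{L:CLS}, the cone point above $p$ becomes an isolated singularity of $\mathcal{F}_{|H}$, and in the chart $\{x_{n+1}=1\}$---where $\mathcal{F}$ is defined by the exact $1$-form $df$---its multiplicity is precisely the Milnor number of $f$ at $p$ by the transverse-slice invariance of the Milnor number.

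The main obstacle is this local identification $i_p=\mu_p(f)$; it is intuitively clear but rests on either the deformation/semi-continuity argument or the foliation-theoretic interpretation sketched above, both of which ultimately exploit the finite-dimensionality of the Milnor algebra at an isolated hypersurface singularity. Once it is in place, the formula follows directly from Bezout.
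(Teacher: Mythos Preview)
Your overall strategy coincides with the paper's: both apply B\'ezout to the $n$ hypersurfaces $\{q_0 f_{x_j}-q_j f_{x_0}=0\}$ of degree $k-1$ and identify the excess local contribution at each $p\in\sing D$ with $\mu_p(f)$. The paper carries this out entirely inside the foliation framework: restricting $\mathcal F$ to a generic hyperplane of $\P^{n+1}$, the singular scheme of $\mathcal F_{|H}$ away from $Z(h)$ is cut out exactly by these $n$ forms, and since locally $\eta=h^{k+1}\,d(f/h^k)$ with $h$ a unit at $p$, the foliation multiplicity there is $\mu_p(f/h^k)=\mu_p(f)$. Your alternative (b) is precisely this argument, so the proposal is essentially the paper's proof.

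Your alternative (a), however, has a slip that you should fix. After dehomogenizing, the local equations
\[
H_j|_{x_0=1}=q_0\,g_{x_j}-q_j\Bigl(kg-\sum_i x_i g_{x_i}\Bigr)
\]
are \emph{not} generic $\C$-linear combinations of $g,g_{x_1},\dots,g_{x_n}$: the Euler term brings in the $\mathcal O_p$-coefficients $q_j x_i$. Hence the colength you must compute is not that of a generic $\C$-linear reduction of the ideal $(g,g_{x_1},\dots,g_{x_n})$, and the one-parameter deformation to $(g_{x_1},\dots,g_{x_n})$ with ``constant colength'' is not justified as stated. The clean way to see that this colength equals $\mu_p(g)$ is exactly the foliation observation above---the $n$ functions $h g_{x_j}-kg\,a_j$ are, up to the unit $h^{k+1}$, the partials of $g/h^k$---so (a) ultimately collapses into (b) rather than providing an independent route.
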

\begin{proof}
Consider the foliation $\mathcal F$ on $\mathbb P^{n+1}$ induced by the $1$-form
\[
\omega = x_{n+1}df - kfdx_{n+1}.
\]
Notice that all the singularities of this foliation are contained in $D$.

It follows from Lemma \ref{L:CLS} that the
degree $d_t(\mathcal G_{\mathcal F})$ of the Gauss map is given by the number of
isolated singularities of $\mathcal F|_{H}$ away from $\sing(\mathcal F)$, where $H$ is generic hyperplane on $\P^{n+1}$.

Denote by $h=x_{n+1}|_H$ and $\tilde{f}=f|_H$ the restrictions to $H$.
Thus $\mathcal F|_H$ is induced by the $1$--form
\[
\eta = hd\tilde{f} - k\tilde{f}dh.
\]

Let us suppose ${h=\sum_{i=1}^{n}a_ix_i}$ with $a_0\neq 0$. On the one hand the
singular set of $\eta$ outside $Z(h)$ is given by $\mathcal G_{\mathcal F}^{-1}(H) \cup \sing{D}$;
and on the other hand it is also given by the intersection of $n$ hypersurfaces of degree $k-1$
\[
\bigcap_{i=1}^{n}Z(a_0 f_{x_i} - a_i f_{x_0})
\]
so by B\'ezout's Theorem we get
\(
d_t(\mathcal G_{\mathcal F}) + \sum \mu_p(f) = (k-1)^n.
\)
Now the proposition follows from Remark~\ref{R:polardegree}.
\end{proof}

For different proofs see \cite[p.\thinspace487]{DP} and \cite[Example~11]{Huh}.

\section{Plane polar maps of low degree}

The main result of this section is a formula for computing the
polar degree of a plane curve in terms of that
of its components. Once we have established that, we present
the classification of all reduced plane curves with polar degree
less or equal than three.

\begin{thm}\
\label{thm:PF1}
\begin{enumerate}[{\rm1.}]
 \item Given an irreducible curve $C\subset\P^2$ of degree $k$,
 then
\begin{equation}
\label{dt:1}
\textstyle
d_t(C) = k-1 + 2p_g + \sum (r_p-1)
\end{equation}
where $p_g$ is the geometric genus and $r_p$ is the
number of branches at $p$.
\item
Given two reduced curves $C,D$ in $\P^2$ with no
common components, we have
\begin{equation}
\label{PF}
d_t(C\cup D) = d_t(C) + d_t(D) + \#(C\cap D)-1.
\end{equation}
\end{enumerate}
\end{thm}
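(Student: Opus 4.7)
My plan is to derive both parts from Proposition~\ref{P:milnor}, which for a reduced plane curve of degree $k$ (automatically with only isolated singularities) gives $d_t(C) = (k-1)^2 - \sum_p \mu_p$.

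For part~(1), I would combine this identity with two classical facts for an irreducible plane curve: Milnor's local formula $\mu_p = 2\delta_p - r_p + 1$, and the genus-degree formula $\sum_p \delta_p = \binom{k-1}{2} - p_g$. A direct algebraic substitution then converts $(k-1)^2 - \sum_p \mu_p$ into $(k-1) + 2p_g + \sum_p (r_p - 1)$, matching \eqref{dt:1}.

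For part~(2), set $k_1 = \deg C$ and $k_2 = \deg D$ and apply Proposition~\ref{P:milnor} to $C \cup D$, which has degree $k_1 + k_2$. The singularities of $C \cup D$ split into three classes: singular points of $C$ off $D$, singular points of $D$ off $C$, and points of $C \cap D$ (which are necessarily singular on $C \cup D$). At singularities of the first two types one factor is a local unit, so the Milnor number is unchanged. At $p \in C \cap D$, I would invoke the classical product formula for Milnor numbers of coprime plane germs,
$$\mu_p(fg) = \mu_p(f) + \mu_p(g) + 2\, i_p(f,g) - 1.$$
Summing these contributions over $\sing(C \cup D)$ and using B\'ezout's theorem $\sum_{p \in C \cap D} i_p(f,g) = k_1 k_2$, I obtain
$$\sum_p \mu_p(fg) = \sum_p \mu_p(f) + \sum_p \mu_p(g) + 2 k_1 k_2 - \#(C \cap D).$$
Substituting into Proposition~\ref{P:milnor} and using the identity $(k_1 + k_2 - 1)^2 - (k_1-1)^2 - (k_2-1)^2 = 2 k_1 k_2 - 1$ then yields \eqref{PF}.

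The main technical point is the product formula for $\mu_p(fg)$ at $p \in C \cap D$; it is standard for plane curve germs but not immediate from the definition. It admits a clean derivation by combining the additivity $\delta_p(fg) = \delta_p(f) + \delta_p(g) + i_p(f,g)$ with the relation $r_p(fg) = r_p(f) + r_p(g)$ and Milnor's formula $\mu = 2\delta - r + 1$ already used in part~(1). Everything else amounts to elementary bookkeeping.
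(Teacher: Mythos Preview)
Your proposal is correct and follows essentially the same route as the paper: both parts are reduced to Proposition~\ref{P:milnor}, with part~(1) handled via the genus formula and Milnor's relation $\mu_p = 2\delta_p - r_p + 1$, and part~(2) via the product identity $\mu_p(fg) = \mu_p(f) + \mu_p(g) + 2\I_p(f,g) - 1$ together with B\'ezout. The paper simply cites this last identity (from \cite[Cor.~6.4.4]{CA}) rather than deriving it from the additivity of $\delta_p$ as you suggest, but otherwise the arguments coincide.
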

\begin{proof} A more general version of \eqref{dt:1} already appeared in \cite{Do} but we give the argument for the
reader's convenience.
Since $C$ is irreducible, the genus formula gives
\[
 \textstyle
    p_g= (k-1)(k-2)/2 - \sum \delta_p
\]
where $\delta_p=\dim_\C \wt{\Op}/\Op$ is the codimension of the local ring $\Op$ in its normalization.
Now combine this with the \emph{Milnor Formula}
\[
\mu_p = 2\delta_p - r_p+1
\]
and Proposition~\ref{P:milnor} to get the result.

Let's prove \eqref{PF}.
Write $C=Z(f)$, $D=Z(g)$ and let $k,l$ be their degrees.
By Proposition~\ref{P:milnor}, the polar degree of the product $fg$ is
\(
(k+l-1)^2 - \sum \mu_p(fg).
\)
Rewriting,
\begin{equation}
\label{eq:fg}
d_t(fg) =
(k-1)^2 + (l-1)^2 + 2kl-1 - \!\!\!
\sum_{p\in C\cap D} \!\!\!\mu_p(fg)-\!\!\!\sum_{p\in C\setminus D} \!\!\!\mu_p(f)-\!\!\!\sum_{p\in D\setminus C} \!\!\!\mu_p(g).
\end{equation}
Since $fg$ is reduced, we have the well-known identity
\cite[Cor.~6.4.4]{CA}
\[
    \mu_p(fg)=\mu_p(f)+\mu_p(g) + 2\I_p(f,g) - 1
\]
where $\I_p$ is the intersection multiplicity. Plugging this into
\eqref{eq:fg} and applying Proposition~\ref{P:milnor}
\[
d_t(fg) = d_t(f) + d_t(g) + 2kl - 2\!\!\!\sum_{p\in C\cap D}\!\!\!\I_p(f,g)+ \#(C\cap D) - 1
\]
and hence by B\'{e}zout's Theorem we are done.
\end{proof}

Theorem~\ref{thm:PF1} makes the classification
of homaloidal plane curves amazingly simple,
since the polar degree never decreases
whenever a new component is added.
We are ready to prove the
celebrated Dolgachev's theorem \cite[Thm.~4]{Do}:
\begin{thm}
\label{thm:deg1}
A reduced homaloidal curve in the projective plane
must be one the following:
\begin{enumerate}[{\rm1.}]
    \item Three nonconcurrent lines.
    \item A smooth conic.
    \item A smooth conic and a tangent line.
\end{enumerate}
\end{thm}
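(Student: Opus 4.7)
The plan is to exploit the two formulas in Theorem~\ref{thm:PF1}. The crucial input is the geometric fact that two distinct reduced curves in $\P^2$ always meet, so $\#(C_1\cap C_2)\geq 1$ in \eqref{PF}, yielding the monotonicity $d_t(C_1\cup C_2)\geq d_t(C_1)+d_t(C_2)$. I would stratify the argument by the number $n$ of irreducible components of a homaloidal curve $C\subset\P^2$.

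For $n=1$, the right-hand side of \eqref{dt:1} is a sum of non-negative integers, so $d_t(C)=1$ forces $k=2$, $p_g=0$, and no singular branches; hence $C$ is a smooth conic (a line has $d_t=0$ and any irreducible curve of degree $\geq 3$ has $d_t\geq 2$).

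For $n=2$, I write $C=C_1\cup C_2$ with $C_1,C_2$ irreducible and apply \eqref{PF}:
\[
d_t(C_1)+d_t(C_2)+\#(C_1\cap C_2)=2,\qquad \#(C_1\cap C_2)\geq 1.
\]
Either $\#(C_1\cap C_2)=2$ and $d_t(C_1)=d_t(C_2)=0$, forcing both components to be lines, which is impossible since two distinct lines meet in exactly one point; or $\#(C_1\cap C_2)=1$ with $\{d_t(C_1),d_t(C_2)\}=\{0,1\}$, giving (by the $n=1$ analysis) a line tangent to a smooth conic.

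The main obstacle is the case $n\geq 3$, which I handle in two steps. First I show every irreducible component of $C$ must be a line. For any component $C_0\subset C$, letting $C'$ be the union of the remaining components, the same constraint gives $d_t(C_0)\leq 1$, so $C_0$ is a line or a smooth conic. An easy induction on components using \eqref{PF} shows that any reduced plane curve $D$ with $d_t(D)=0$ has every component a line; hence if $C_0$ were a smooth conic we would have $d_t(C')=0$ and $\#(C_0\cap C')=1$, so $C'$ would be a union of $\geq 2$ lines each meeting $C_0$ only at the same single point $p$, and B\'ezout would force each such line to be the unique tangent of $C_0$ at $p$, a contradiction. Second, with $C=L_1\cup\cdots\cup L_n$ now a union of lines, iterating \eqref{PF} shows that adding a line $L$ to a configuration $C_*$ changes $d_t$ by $\#(L\cap C_*)-1\geq 0$, so $d_t$ is monotone non-decreasing in the configuration. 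If all $n$ lines are concurrent then $d_t=0$; otherwise one can extract a non-concurrent triple inside $C$, which has $d_t=1$ by the $n=3$ analysis, and any further line from $C$ meets this triple in at least $2$ distinct points since no point lies on all three sides of the triangle, pushing $d_t(C)\geq 2$. Thus $n=3$ with the three lines non-concurrent is the only surviving case, completing the list in the theorem.
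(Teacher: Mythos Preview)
Your proof is correct and follows essentially the same approach as the paper's: both arguments rest entirely on the two formulas of Theorem~\ref{thm:PF1}, first using \eqref{dt:1} to force each irreducible component to be a line or a smooth conic, and then using \eqref{PF} to control the possible configurations. The paper's version is terser---it argues globally that there is at most one conic and then handles the lines-only case in a sentence---whereas you stratify explicitly by the number of components and spell out the monotonicity and the exclusion of a conic with two or more attached lines; but the underlying reasoning is the same.
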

\begin{proof}
Let $C\subset\P^2$ be a reduced homaloidal curve. Then it follows from equations
\eqref{dt:1} and \eqref{PF} that $C$ can have only lines  and
at most one irreducible conic as components.  If $C$ has a conic,
then \eqref{PF} imposes $C$ can have at most one line, which must
meet the conic at a single point.

Assume now our curve has only lines.
These cannot be all concurrent,  for otherwise
the polar degree is zero by \eqref{PF}. If $C$ is the union of three nonconcurrent lines,
then $d_t=1$ by \eqref{PF}. Finally, if $C$ has more than
three lines, not all concurrent, once more formula \eqref{PF} gives $d_t>1$.
\end{proof}

The polar degree two case is also quite easy.
\begin{thm}
\label{thm:deg2}
A reduced plane curve with polar degree two
must be one of the following:
\begin{enumerate}[{\rm1.}]
    \item Three concurrent lines and a fourth line not meeting
    the center point.
    \item A smooth conic and a secant line.
        \item A smooth conic, a tangent and a line passing thru the tangency point.
    \item A smooth conic and two tangent lines.
    \item Two smooth conics meeting at a single point.
    \item Two smooth conics meeting at a single point and the common tangent.
    \item An irreducible cuspidal cubic.
    \item An irreducible cuspidal cubic and its tangent at the smooth flex point.
    \item An irreducible cuspidal cubic and its tangent at the cusp.
\end{enumerate}
\begin{figure}[h]
\centering
\includegraphics[width=0.5\textwidth]{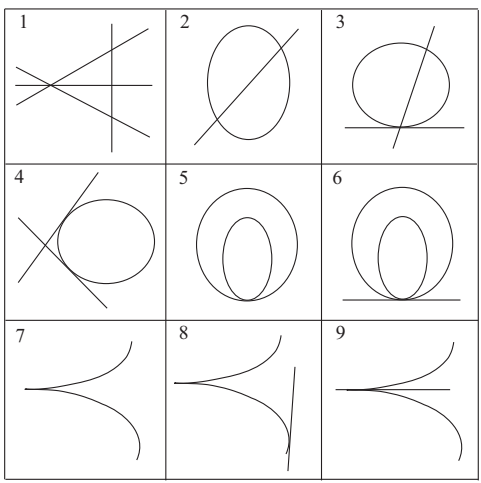}
\caption{Plane curves with $d_t=2$.}
\end{figure}
\end{thm}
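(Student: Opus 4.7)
The plan is to reduce the problem to a finite case analysis: first, I would determine which irreducible curves can appear as components, and then assemble all configurations using \eqref{PF}. I would begin by recording a monotonicity principle: since any two curves in $\P^2$ meet by B\'{e}zout's theorem, \eqref{PF} yields $d_t(C_1 \cup C_2) \geq d_t(C_1) + d_t(C_2)$, so every irreducible subcurve of a curve with $d_t = 2$ has polar degree at most two. Combined with the lower bound $d_t \geq k - 1$ for an irreducible $C$ of degree $k$ coming from \eqref{dt:1}, this limits components to degree at most three, and a direct inspection shows the only admissible components are lines ($d_t = 0$), smooth conics ($d_t = 1$), and irreducible cuspidal cubics ($d_t = 2$, i.e., rational cubics whose sole singularity has one branch).

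Let $a$, $b$, $c$ denote the numbers of lines, smooth conics, and cuspidal cubics among the components. Monotonicity gives $2c + b \leq 2$, so it suffices to treat the four cases $(b, c) \in \{(0, 1), (2, 0), (1, 0), (0, 0)\}$. In each case I would iterate \eqref{PF} using the following recurring observation: appending a new component $E$ to a subcurve $C'$ contributes $d_t(E) + \#(E \cap C') - 1 \geq 0$ to the polar degree, so once $d_t = 2$ has been reached, any further component must satisfy $d_t(E) = 0$ and $\#(E \cap C') = 1$. In other words, any extra component must be a line meeting $C'$ in exactly one distinct point, which is a strong tangency-type constraint.

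Carrying out the four cases: $(b, c) = (0, 1)$ gives either the bare cuspidal cubic (item 7), or the cubic together with one of the two lines meeting it in a single distinct point---namely the tangent at the cusp (item 9) or the tangent at a smooth flex (item 8). $(b, c) = (2, 0)$ forces the two conics to meet at a unique point with contact of order four (item 5), and any further line is then pinned to the common tangent (item 6). $(b, c) = (1, 0)$ splits by the number of lines: a conic with one secant (item 2); a conic with one tangent and a secant through the tangency point (item 3); a conic with two tangents whose intersection lies off the conic (item 4); three or more lines are excluded by the single-point constraint. Finally $(b, c) = (0, 0)$ reduces to an induction on $a$ showing that $r$ lines with $r - 1$ concurrent at a point and the last one transverse satisfy $d_t = r - 2$, so $r = 4$ yields item 1 and all other line arrangements have $d_t \neq 2$.

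The main obstacle is the delicate bookkeeping at each step: in every completed subcase one must confirm that no further component can be appended. This reduces to checking that the single-point incidence $\#(E \cap C') = 1$ forces the new line to coincide with one already present, usually via a tangent-line uniqueness argument combined with B\'{e}zout at the incidence point. Each verification is a short local calculation, but the number of subcases demands systematic care. The converse direction---that each of the nine configurations indeed has $d_t = 2$---is an immediate computation from \eqref{PF}.
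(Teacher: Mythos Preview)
Your plan is correct and follows essentially the same route as the paper: bound the degree of irreducible components via \eqref{dt:1}, then assemble configurations using the additivity formula \eqref{PF}. The paper's own proof is in fact much terser---it dispatches the cubic case in two lines and handles all unions of lines and conics ``by inspection''---so your systematic split into the cases $(b,c)\in\{(0,1),(2,0),(1,0),(0,0)\}$ together with the running observation that an appended component contributes $d_t(E)+\#(E\cap C')-1\ge 0$ is a more explicit version of the same argument. The one place where your outline is thinner than the rest is the line-only case $(b,c)=(0,0)$: computing $d_t=r-2$ for the pencil-plus-transversal family does not by itself rule out other arrangements of $r\ge 5$ lines with $d_t=2$, so when you execute this step be sure to invoke monotonicity to show that any non-cone arrangement of five or more lines contains a non-cone four-line subarrangement and that the fifth line necessarily meets it in at least two points.
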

\begin{proof}
Such a curve cannot have components
of degree greater than 3. From \eqref{dt:1}  we see
that an irreducible cubic with $d_t=2$ must be cuspidal;
and in view of \eqref{PF} we may attach to it at most one line
and they ought to meet at a single point. This accounts for the last three cases
in the statement.

The remaining cases,
unions of lines and conics, may be analyzed by inspection
and are exactly the ones listed above.
\end{proof}

We proceed to classify the plane curves with polar degree three.
\begin{theorem}
\label{thm:deg3}
A reduced plane curve with polar degree three must be one of the 31 types
shown in Figure~2.
\nopagebreak
\begin{figure}[ht]
\centering
\includegraphics[width=0.8\textwidth]{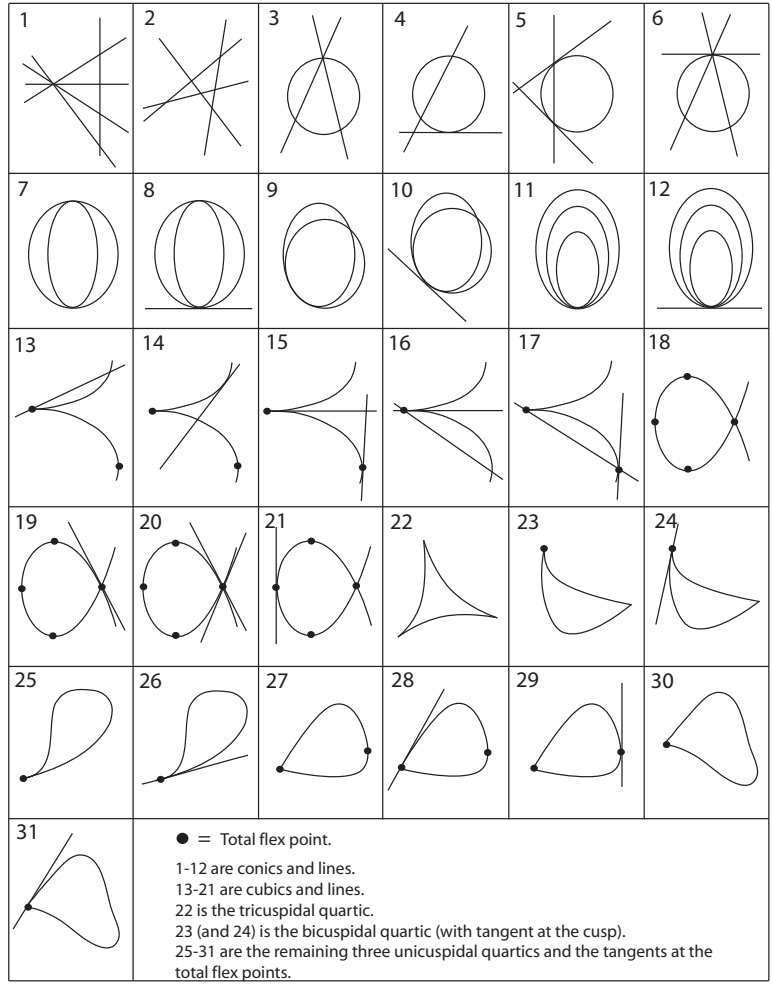}
\caption{Plane curves with polar degree three.}
\label{fig2}
\end{figure}
\end{theorem}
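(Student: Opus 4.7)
The plan is to run the same kind of case-by-case analysis as in Theorems~\ref{thm:deg1} and \ref{thm:deg2}, but pushed one degree further. Formula \eqref{dt:1} classifies the possible irreducible components, and the additivity formula \eqref{PF} reduces a reducible configuration to smaller instances that are already in hand.

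\emph{Irreducible case.} If $C$ is irreducible of degree $k$ with $d_t(C)=3$, formula \eqref{dt:1} forces $k\leq 4$. Lines and smooth conics have polar degree $0$ and $1$, and smooth cubics have polar degree $4$, so these are ruled out. For $k=3$, \eqref{dt:1} gives $2p_g+\sum(r_p-1)=1$, hence $p_g=0$ with a single node: a nodal cubic. For $k=4$, we need $p_g=0$ with every singular point unibranch, and by the genus formula $\sum\delta_p=3$; the possibilities are a quartic with a single unibranch singularity of $\delta=3$ (types $E_6$ or $A_6$), a quartic with one ordinary cusp and one ramphoid cusp, or a tricuspidal quartic. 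Degrees $k\geq 5$ give $d_t\geq 4$.

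\emph{Reducible case.} Write $C=C'\cup C_0$ with $C_0$ an irreducible component; then \eqref{PF} becomes
\[
d_t(C')+d_t(C_0)+\#(C'\cap C_0)=4,
\]
with $\#(C'\cap C_0)\geq 1$ by B\'{e}zout. I would argue by induction on the number of components, so that $C'$ is classified either by Theorem~\ref{thm:deg1}, Theorem~\ref{thm:deg2}, or by a smaller instance of the present theorem. Splitting on $d_t(C_0)$: if $d_t(C_0)=3$, then $d_t(C')=0$ and $\#(C'\cap C_0)=1$, so $C'$ is a pencil of concurrent lines meeting $C_0$ at a single point; if $d_t(C_0)=2$, then $C_0$ is a cuspidal cubic attached to a $d_t\leq 1$ curve at one or two points; and if $d_t(C_0)\leq 1$, then $C_0$ is a line, a smooth conic, or a small homaloidal configuration, and $C'$ is drawn from Theorem~\ref{thm:deg2}. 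In each subcase \eqref{PF} fixes only the \emph{number} of intersection points, so the various tangency and higher-contact patterns appear as separate configurations.

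The main obstacle is the bookkeeping: assembling all these sublists, eliminating the many duplicates that arise from different choices of $C_0$ representing the same curve, and confirming the geometric realizability of each candidate (especially the irreducible quartics, where one must check which unibranch singularity types actually occur on a plane quartic, and the positional constraints such as tangency, contact order, or concurrency for the reducible configurations). After this consolidation one verifies that exactly $31$ types survive, matching Figure~\ref{fig2}.
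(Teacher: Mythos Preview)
Your strategy is exactly the paper's: bound the irreducible components via \eqref{dt:1} and assemble reducible configurations inductively via \eqref{PF}. But one step you have filed under ``bookkeeping'' is in fact the heart of the count. In your split with $d_t(C_0)=2$, $d_t(C')=1$, $\#(C'\cap C_0)=1$ you produce the candidate ``smooth conic plus cuspidal cubic meeting at a single point.'' This configuration does \emph{not} exist: no smooth conic meets a cuspidal cubic with intersection multiplicity six at one point. The paper isolates precisely this case and proves the non-existence by a short linear-equivalence argument on the affine part of the cubic (using that the unique conic with sixfold contact at a non-cusp, non-flex point would force the tangent line there to pass through the flex, which never happens). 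Without this obstruction you land at $32$ types, not $31$; so the realizability check is not routine here, it contains one genuine geometric lemma that must be proved.

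A smaller point: for the irreducible quartics you enumerate by singularity profile ($E_6$, $A_6$, $A_2+A_4$, $3A_2$). The paper does not argue this directly but cites the classification of rational cuspidal plane quartics in \cite{Moe,Nam}, and also uses the list of their \emph{total} flex points to decide which tangent lines may be adjoined. You should check your abstract list against that classification, both for the number of projective types and for the flex data, since this controls several of the reducible entries as well.
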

\begin{proof}

Here irreducible quartics will hop in but
only cuspidal rational ones are allowed in view of equation~\eqref{dt:1}.
As before, we may attach to such a quartic a tangent line but
only at \emph{total} flex points.
A neat list of all cuspidal
rational plane quartics and their flex points can be found
in \cite{Moe} (see also \cite{Nam}): up to projective equivalence there are exactly five,  labeled 22,
23, 25, 27 and 30 in Figure~2.

Besides, we have to deal with unions
of cubics, conics and lines. Keeping an eye in \eqref{dt:1} and
\eqref{PF}, all it takes is a careful analysis
of the possible configurations and there is not much to say about
it, except for the case of cubics and conics, which deserves more attention.

Let $C$ be a conic and let $D$ be a cubic, both irreducible.
Since the conic is homaloidal, it follows from
equation \eqref{PF},
\[
d_t(C\cup D) = d_t(D) + \#(C\cap D).
\]
Since smooth and nodal cubics have polar degree $\geq 3$,
we see there is only one possible case, namely, the cubic must be cuspidal and it must intersect the conic at a single point. This case
is missing in Figure~2, for a very simple reason: this configuration does not exist! Although we believe this is well-known we are
obliged to give a proof, for we lack a suitable reference.

Let $D$ be a cuspidal cubic and let us show that $C$ and
$D$ cannot meet at a point with multiplicity six. By tensorizing the exact sequence that defines the ideal sheaf of $D$ by $\mathcal O_{\P^{2}}(2)$, we get
\[
0 \longrightarrow \mathcal O_{\P^{2}}(-1)  \longrightarrow \mathcal O_{\P^{2}}(2) \longrightarrow i_{*}\mathcal O_{D}(2) \longrightarrow 0
\]
and the long exact sequence in cohomology yields an isomorphism
\[
{\mathrm{H^{0}}}(\P^{2}, \mathcal O_{\P^{2}}(2))  \longrightarrow {\mathrm{H^{0}}}(D, \mathcal O_{D}(2)).
\]
Hence, if a conic meets $D$ at a point $p$ with multiplicity six,
this conic is unique. If $p$ is the cusp or the flex, then this conic is the double tangent line.
Assume $p$ is neither the cusp nor the flex. Let $L$
be the tangent line at $p$ and write $L\!\cdot\!D = 2p+q$.
If there were a conic intersecting $D$ at $p$ with multiplicity six, we would get a linear equivalence
\(
4p+2q \sim 6p,
\)
and hence $2p\sim 2q$, in $D\setminus \{c\}$, where $c$ denotes the cusp. Let $2q+r$ the divisor defined by the restriction on the tangent at $q$. Then $2p+q\sim 2q+r$ and thus $q\sim r$ in $D\setminus \{c\}$. This implies that $q=r$ (see \cite[Example~6.11.4]{Har}). Therefore $q$ would be the flex point of $D$. But $D$ has no points $p$ for which the tangent line passes through the flex point, as can be checked by direct
computation or, better, by a simple argument with the dual curve of $D$, which is also a cuspidal cubic.
\end{proof}


\section{Logarithmic differential forms and the Euler characteristic}
In this section we show that the formula \eqref{eq:05} of Dimca and Papadima, which relates the polar degree of a hypersurface with the topological Euler characteristic of its affine part, can also be obtained by algebraic methods. As a consequence, we give  a generalization of equation \eqref{PF} for higher dimensions.

\medskip
Let $X$ be a smooth projective variety of dimension $n$. We say that a divisor $D=\sum D_i$ of $X$ has \emph{normal crossings} if $D$ is reduced, 
each component $D_{i}$ is smooth and at each point
of intersection of some of the divisors $D_i$, say $D_1,\dots,D_k$,
there are local analytic coordinates $z_1,\dots,z_n$ for $X$
so that $D_i$ is given by $z_i=0$ for $i=1,\dots,k$.

Let us suppose $D$ reduced. We define the \emph{sheaf of logarithmic differentials} $\Omega^1_X(\log D)$ as a subsheaf of the sheaf $\Omega^1_X(D)$ of $1$-forms with poles at most on $D$ and of order one. This sheaf is the image of the natural map $\Omega_{X}^{1}\otimes \mathcal O_{X}^{\oplus n} \to \Omega^1_X(D)$, which is given by the inclusion $\Omega_{X}^{1} \to \Omega^1_X(D)$ and by the homomorphisms $\mathcal O_{X} \to \Omega^1_X(D)$ sending $1 \mapsto d\log f_{i}$, where $f_{i}$ is a local equation of $D_{i}$. The reducedness assumption on $D$ is not essential here,
for the differential logarithmic of a power of a function is a multiple constant of the differential logarithmic of the function. So $D$ and $D_{\mathrm{red}}$ define the same sheaf $\Omega^1_X(\log D)$. A basic known fact is that if $D$ has normal crossings, then the sheaf $\Omega^1_X(\log D)$ is locally free of rank $n$.

It has been shown in \cite{FP} that the polar degree can also be given as the degree of the total Chern
class, denoted here by $c(\cdot)$, of the bundle of logarithmic differentials of a resolution of $D$, as we shall review now.

\begin{proposition}
\label{prop:Cherntop}
Let $D\subset\P^n$ be a reduced hypersurface and
$\pi\colon X \to \P^n$ be an
embedded resolution of singularities of $D$ so that
the total transform $\pi^*D$ has normal crossings.
Then, for a generic hyperplane $H\subset\P^n$,
\begin{equation}
\label{eq:Cherntop}
\textstyle
 d_t(D) = \int_X c(\Omega^1_X(\log \pi^*(D+H))).
\end{equation}
\end{proposition}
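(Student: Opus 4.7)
The plan is to construct a global section of $\Omega^1_X(\log\pi^*(D+H))$ whose isolated zeros match, one-for-one, a generic fibre of $\nabla f$, and then to invoke the standard identification of the number of isolated zeros of a section of a rank-$n$ vector bundle on an $n$-dimensional smooth projective variety with its top Chern number.

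First, I would reduce the polar degree to a zero-counting problem via the foliation $\mathcal F$ of Section~2. By Remark~\ref{R:polardegree} and Lemma~\ref{L:CLS}, for a generic hyperplane $H'\subset\P^{n+1}$, $d_t(D)=d_t(\mathcal G_{\mathcal F})$ equals $\#\mathcal G_{\mathcal F}^{-1}(H')$, and each such point is a multiplicity-one singularity of $\mathcal F|_{H'}$ lying outside $\sing(\mathcal F)\cap H'$. Identifying $H'\cong\P^n$ so that $h:=x_{n+1}|_{H'}$ cuts out a generic hyperplane---which we may take to be the $H$ of the statement---the restricted foliation $\mathcal F|_{H'}$ is defined by $\eta = h\,df-kf\,dh$, equivalently by the fibres of the degree-zero rational function $f/h^k$. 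On the complement $\P^n\setminus(D\cup H)$ the same foliation is cut out by the closed logarithmic form
\[
\xi \;=\; \frac{df}{f} \;-\; k\,\frac{dh}{h} \;\in\; H^0\bigl(\P^n,\Omega^1_{\P^n}(\log(D+H))\bigr),
\]
so $\xi$ has exactly $d_t(D)$ isolated, non-degenerate zeros on $\P^n\setminus(D\cup H)$.

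Next, I would pull $\xi$ back to $X$. Since $\pi^*(D+H)$ has normal crossings, $\pi^*\xi$ is a global section of the rank-$n$ locally free sheaf $\Omega^1_X(\log\pi^*(D+H))$, and since $\pi$ is an isomorphism over $\P^n\setminus(D\cup H)$, the $d_t(D)$ non-degenerate zeros of $\xi$ lift to as many isolated, non-degenerate zeros of $\pi^*\xi$. The crux is to verify that $\pi^*\xi$ has no further zeros on $\pi^*(D+H)$. For each irreducible component $N$, writing $\pi^*f$ as vanishing to order $a$ and $\pi^*h$ to order $b$ along $N$, a direct local computation gives that the residue of $\pi^*\xi$ along $N$ is $a-kb$. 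This is nonzero in every case: for the strict transform of a component of $D$ one has $(a,b)=(1,0)$; for the strict transform of $H$, $(a,b)=(0,1)$; and for an exceptional divisor of $\pi$, $a>0$ while $b=0$, using that a generic $H$ avoids the centres of blow-up of $\pi$.

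With nonzero residue along every component, $\pi^*\xi$ cannot vanish at any point of $\pi^*(D+H)$, so its zero scheme consists exactly of the $d_t(D)$ reduced points on the open complement. The standard formula counting isolated zeros of a section of a rank-$n$ bundle on an $n$-fold then gives
\[
d_t(D)\;=\;\int_X c_n\bigl(\Omega^1_X(\log\pi^*(D+H))\bigr)\;=\;\int_X c\bigl(\Omega^1_X(\log\pi^*(D+H))\bigr),
\]
the last equality being the convention that $\int_X$ extracts the top-dimensional component of the total Chern class. The main obstacle I anticipate is the residue computation ruling out zeros along the boundary divisor: it uses essentially the coefficient $k=\deg f$ in $\xi$ (so that $f/h^k$ is a degree-zero rational function) together with the genericity of $H$.
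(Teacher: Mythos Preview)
Your proposal is correct and follows essentially the same route as the paper: pass to the logarithmic form $df/f - k\,dh/h$, pull it back to $X$, argue via non-vanishing residues along every component of $\pi^*(D+H)$ that the zero scheme of the section is zero-dimensional, and then read off the top Chern number; the residue computation you spell out is precisely what the paper delegates to \cite[Lemmas~3 and~4]{FP}. One small wording issue: a generic $H$ need not literally \emph{avoid} positive-dimensional blow-up centres, but it will not \emph{contain} any of them, and that is all you need to conclude $b=0$ along each exceptional divisor.
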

\begin{proof}
This equality is a consequence of the Remark~\ref{R:polardegree}. We sketch the argument in the next few lines for the convenience of the reader.

Suppose $D$ is given by a homogeneous polynomial $f \in \C[x_0,\dots,x_n]$ of degree~$k$.
Let $\mathcal F$ be the foliation on $\mathbb P^{n+1}$ induced by the rational $1$-form
\[
\omega = \frac{df}{f} - k\frac{dx_{n+1}}{x_{n+1}}
\]
and $\mathcal G_{\mathcal F}$ its Gauss map. By Lemma \ref{L:CLS}, $d_{t}(\mathcal G_{\mathcal F})$ coincides with the number of isolated singularities of $\Fcal|_{\mathbb P^{n}}$ that are not singularities of $\mathcal F$; here, $\mathcal F |_{\mathbb P^{n}}$ is the restriction of $\mathcal F$ to a generic $\mathbb P^{n} \subset \mathbb P^{n+1}$. Since all the singularities of $\mathcal F$ are contained in $Z(x_{n+1} f)$ we have just to count the isolated singularities of  $\mathcal F |_{\P^n}$ away from $Z(x_{n+1} f)$. The intersection in  $\mathbb P^{n+1}$ of  $Z(x_{n+1} f)$ and a generic $\mathbb P^{n}$ is isomorphic to the union of $D$ with a generic hyperplane $H\subset \mathbb P^{n}$. Thus the rational $1$-form $\omega|_{\mathbb P^{n}}$ can be viewed as an element of ${\rm{H^{0}}}(\mathbb P^{n}, \Omega_{\mathbb P^{n}}^{1}(\log (D+H)))$.
Since $\pi$ is an embedded resolution for $D$, Bertini's Theorem implies that it is also an embedded resolution of $D\cup H$ and therefore $\Omega^1_X(\log \pi^{*}(D+H))$ is locally free.  Assuming the singular scheme of $\pi^{*}\omega|_{\mathbb P^{n}} \in \mathrm{H}^{0}(X, \Omega_{X}^{1}(\log \pi^{*}(D+H)))$ has just a zero-dimensional part, its length is measured by the top Chern class of $\Omega^1_X(\log \pi^{*}(D+H))$. And the latter assumption follows from the fact that the residues of $\pi^{*}\omega|_{\mathbb P^{n}}$ on each irreducible component of the support of $\pi^{*}(D+H)$ are non-zero. For details see \cite[Lemmas 3 and 4]{FP}.
\end{proof}

\begin{remark}
\label{rmk:GB}
Let $D$ be a smooth projective variety. 
The topological Euler characteristic of $D$ is computed
by the degree of
the Chern total class of the tangent bundle of $D$, that is,
\[
\textstyle
  \euler(D) = \int_D c(TD).
\]
This is
the Gauss-Bonnet theorem.
For the case that $D$ is a divisor with normal crossings
of a smooth variety $X$, we have the following
version:
\begin{equation}
 \label{eq:04}
\textstyle
\euler(X\setminus D)=(-1)^n \int_X c(\Omega^1_X(\log D)).
\end{equation}
Here, $X\setminus D$ is complement of the support of $D$. This has
been proved by R. Silvotti \cite[Thm.~3.1]{Si} and recovered by P.
Aluffi \cite[\S\thinspace2.2]{Alu99} (along the way of his
characterization of Chern-Schwartz-MacPherson classes), but both
were predated by Y. Norimatsu \cite{No}, a two-page gem that
apparently had fallen into oblivion.
\end{remark}

Let $D=Z(f)\subset \P^n$ be a hypersurface
and consider its polar map $\nabla f\colon\P^n \dashrightarrow  \check{\P}^n$. For each $i=0,\dots,n-1$,
we denote by $d_i(D)$ the \emph{$i$-th projective degree} of the polar map, defined as the degree of the closure of the algebraic set $\nabla f|_{U}^{-1}(L_{i})$, where $L_{i} \subset  \check{\P}^n$ is a generic linear subspace of dimension $i$ 
and $U\subset\P^n$ is the Zariski open set where the polar map is regular. Notice that $d_{0}(D)$ is just the polar degree $d_{t}(D)$. It follows from \cite[Cor.~2]{FP}  that $d_{i}(D)$ coincides with the topological degree of the Gauss map of the restriction of the foliation $\mathcal F$ to a generic $\mathbb P^{n+1-i}\subset \mathbb P^{n+1}$, that is,
\(
d_{i}(D)=d_{t}(\mathcal G_{\mathcal F|_{\mathbb P^{n+1-i}}}).
\)
Since $\mathcal F|_{\P^{n+1-i}}$ is the foliation associated to the divisor $D\cap \mathbb P^{n-i}$ for a generic  $\mathbb P^{n-i}\subset \mathbb P^{n}$, it follows from  Remark~\ref{R:polardegree} that
\begin{equation}
\label{restriction}
d_{i}(D)=d_{t}(D\cap \mathbb P^{n-i}).
\end{equation}

We are ready to give formulas relating the projective degrees of the polar map with the Euler characteristic.

\begin{cor} Let $\P^i, H\subset\P^n$ denote a generic linear subspace of dimension $i$ and a generic hyperplane, respectively. 
\label{P:dt2} 
\begin{enumerate}[{\rm1.}]
\item Given a hypersurface $D\subset\P^n$, we have 
\begin{equation}
\label{dt:2}
 d_i(D)= (-1)^{n-i} ( 1 - \euler(D\cap\P^{n-i}\setminus H) )
\end{equation}
for  $i=0,\dots,n-1$.
\item Given two hypersurfaces $D_1, D_2\subset \P^n$, we have
\begin{equation}
\label{PF2}
d_i(D_1 \cup D_2) = d_i(D_1) + d_i(D_2) +
(-1)^{n-i} ( \euler( D_1\cap D_2\cap \P^{n-i}\setminus H ) - 1 )
\end{equation}
for $i=0,\dots,n-1$.
\end{enumerate}
\end{cor}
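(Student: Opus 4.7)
\medskip
\textbf{Proof plan.}

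The plan is to deduce part~(1) from Proposition~\ref{prop:Cherntop} together with the logarithmic Gauss--Bonnet identity \eqref{eq:04}, and then obtain part~(2) from part~(1) via inclusion--exclusion for the Euler characteristic.

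\emph{Reduction of part (1) to the case $i=0$.} Formula \eqref{restriction} says $d_i(D) = d_t(D\cap\P^{n-i})$ for a generic linear subspace $\P^{n-i}\subset\P^n$. If we know the claim for $i=0$ on every projective space, then applying it in $\P^{n-i}$ to the hypersurface $D\cap\P^{n-i}$ and to the generic hyperplane $H\cap\P^{n-i}\subset\P^{n-i}$ gives immediately
\[
d_i(D)=d_t(D\cap\P^{n-i})=(-1)^{n-i}\bigl(1-\euler(D\cap\P^{n-i}\setminus H)\bigr),
\]
which is \eqref{dt:2}. So I only need to prove $d_t(D)=(-1)^n(1-\euler(D\setminus H))$.

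\emph{Proof of the case $i=0$.} Let $\pi\colon X\to\P^n$ be an embedded resolution of $D$, so that $\pi^*D$ has normal crossings; for $H$ generic, Bertini's theorem guarantees that $\pi^*(D+H)$ also has normal crossings, exactly as in the proof of Proposition~\ref{prop:Cherntop}. Proposition~\ref{prop:Cherntop} then gives
\[
d_t(D)=\int_X c(\Omega^1_X(\log\pi^*(D+H))),
\]
and the logarithmic Gauss--Bonnet formula \eqref{eq:04} of Remark~\ref{rmk:GB} rewrites the right-hand side as $(-1)^n\,\euler(X\setminus\pi^*(D+H))$. Since $\pi$ is an isomorphism over $\P^n\setminus(D\cup H)$, this complement equals $\euler(\P^n\setminus(D\cup H))$. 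A routine application of additivity of the Euler characteristic, combined with the standard decomposition $\euler(D\cup H)=\euler(D\setminus H)+\euler(H)$ and the values $\euler(\P^n)=n+1$, $\euler(H)=n$, then yields
\[
\euler(\P^n\setminus(D\cup H))=1-\euler(D\setminus H),
\]
and the claim follows.

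\emph{Proof of part (2).} Apply \eqref{dt:2} to $D_1\cup D_2$ in $\P^n$:
\[
d_i(D_1\cup D_2)=(-1)^{n-i}\bigl(1-\euler((D_1\cup D_2)\cap\P^{n-i}\setminus H)\bigr).
\]
Inclusion--exclusion applied to the pair of locally closed sets $D_1\cap\P^{n-i}\setminus H$ and $D_2\cap\P^{n-i}\setminus H$ gives
\[
\euler\bigl((D_1\cup D_2)\cap\P^{n-i}\setminus H\bigr)=\sum_{j=1,2}\euler(D_j\cap\P^{n-i}\setminus H)-\euler(D_1\cap D_2\cap\P^{n-i}\setminus H).
\]
Substituting this back and using \eqref{dt:2} once for each of $D_1$ and $D_2$ to replace the individual Euler characteristics with $d_i(D_j)$ produces \eqref{PF2} after elementary algebra.

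\emph{Main obstacle.} The substantive step is the identity $d_t(D)=(-1)^n(1-\euler(D\setminus H))$ in part~(1); everything else is bookkeeping with Euler characteristics. The only delicate point in that step is making sure the embedded resolution $\pi$ of $D$ actually puts $\pi^*(D+H)$ in normal-crossings form, so that the logarithmic Gauss--Bonnet formula \eqref{eq:04} is applicable. This is exactly where the genericity of $H$ (and Bertini's theorem, as already invoked in the proof of Proposition~\ref{prop:Cherntop}) is used, and it is the only technical point that requires care.
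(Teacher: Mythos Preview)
Your proof is correct and follows essentially the same approach as the paper: reduce to $i=0$ via \eqref{restriction}, combine Proposition~\ref{prop:Cherntop} with the logarithmic Gauss--Bonnet identity \eqref{eq:04} and additivity of the Euler characteristic to get part~(1), then obtain part~(2) by inclusion--exclusion. The only step the paper makes explicit that you leave implicit is the harmless reduction to $D$ reduced before invoking Proposition~\ref{prop:Cherntop}.
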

\begin{proof} In view of \eqref{restriction}, it is enough to prove both assertions for $i=0$. 

Let us prove that formula \eqref{dt:2} holds for $i=0$. 
We may assume $D$ reduced.
Let $\pi\colon X \to \P^n$ be an embedded resolution of singularities of $D$ such that $\pi^*D$ has
normal crossings. Hence it follows
from \eqref{eq:Cherntop} and \eqref{eq:04} that
\begin{align*}
 (-1)^nd_t(D)
&= \textstyle(-1)^n \int_X c(\Omega^1_X(\log \pi^*(D+H))) \\
&= \euler(X\setminus \pi^*(D + H))\\
&= \euler(\P^n\setminus (D + H))\\
&= 1 - \euler(D\setminus H)
\end{align*}
where in the last equality have used $\euler(\P^n)=n+1$ and the inclusion-exclusion principle for the Euler characteristic  of algebraic varieties. 

Finally, \eqref{PF2} follows from \eqref{dt:2} and 
straightforward manipulations using inclusion-exclusion.
\end{proof}

Dimca and Papadima \cite[Thm.~1]{DP} proved formula~\eqref{dt:2} for $i=0$,
by using topological methods. The generalization for higher values
of $i$ has been proved by Huh \cite[Thm.~8]{Huh} quite recently.
The expression \eqref{dt:2} written here is slightly different
from theirs.  Furthermore, we point out that an identity for the polar degree similar
to \eqref{PF2} has appeared in \cite[Prop.~5]{Di}, but only for
complete intersections. 

\begin{remark}
Let $D_1,D_2 \subset \P^n$ be hypersurfaces with no
common components. It would be interesting to know whether the
`correction term' in \eqref{PF2}:
\[
(-1)^n ( \euler( D_1\cap D_2\setminus H ) - 1 )
\]
is always non-negative,
as this would reduce the
classification of homaloidal hypersurfaces to the irreducible ones.
That is the case in all examples
we have checked. If that were the case in general, then we would have
the inequality
\[
   d_t(D_1\cup D_2) \geq \max \{d_t(D_1), d_t(D_2)\}
\]
proved in \cite[Cor.~3]{FP} using foliations.
\end{remark}

\section{Applications}

\begin{example}
\label{dtiguais}
 Starting with a hypersurface $D$ of degree $d$ in $\P^{n}$,
there is a very simple way to construct another hypersurface $Y$, now
of degree $d+1$ and sitting in $\P^{n+1}$, with the
same polar degree: let $C, L\subset \P^{n+1}$ be
the projective cone over $D$ and a generic hyperplane respectively and take
$Y$ as their union. Since cones and hyperplanes have null polar degree
and $C\cap L$ is isomorphic to $D$, equations~\eqref{PF2} and \eqref{dt:2} yield
\begin{align*}
 d_t(Y) & = d_t(C) + d_t(L) + (-1)^{n+1} (\euler((C\cap L)\setminus H)-1)\\
        & = (-1)^n (1-\euler(D\setminus h)) \\
        & = d_t(D)
\end{align*}
where $H$ and $h$ stand for generic hyperplanes in $\P^{n+1}$ and $\P^n$, respectively.

As a consequence, let us show that there are homaloidal  reduced hypersurfaces in $\P^n$
for all $n\geq 3$ and of any degree $\geq 2$.
Indeed, this is already known for $n=3$ \cite[Thm.~3.13]{CRS} and so our construction gives homaloidal hypersurfaces in $\P^4$
for any degree $\geq 3$. As for degree two, we already know that
smooth quadrics are always homaloidal, so we are done
for $n=4$. Now one has just to workout the same reasoning for higher values of $n$.

The same construction gives
a family of counter-examples for the \emph{Hesse's problem} (see \cite{CRS}), namely, hypersurfaces
with vanishing Hessian that are not cones: one has only to
notice that null Hessian is equivalent to polar degree zero
and that if $D$ is not a cone, then $Y$ is not a cone
as well.
\end{example}

\begin{example}
\label{ex:normalcrossings}
Let  $D=\sum_{i=1}^{r}D_{i}$ be a reduced divisor in $\P^n$ with normal crossings. Let $H\subset\P^n$ be a generic hyperplane.
From the exact sequence of locally free sheaves on $\P^n$
\begin{equation*}
\label{seq1}
    0 \to \Omega_{\P^n}^1 \to \Omega_{\P^n}^1(\log(D+H)) \to \oplus_i \Ocal_{D_i} \oplus \Ocal_H \to 0
\end{equation*}
and from Euler's exact sequence
\[
    0 \to \Omega_{\P^n}^1 \to \Ocal_{\P^n}(-1)^{\oplus n+1} \to \Ocal_{\P^n} \to 0
\]
we get, by Whitney's formula 
and Proposition~\ref{prop:Cherntop}, that
\begin{equation}
\label{eq:normalcrossings}
 {\textstyle d_t(D) = \int_{\P^n} c(\Omega_{\P^n}^1(\log(D+H)))}
 = \left[ \frac{(1-h)^n}{(1-k_1 h)\dotsb (1-k_r h)} \right]_n
\end{equation}
where $[\cdot]_n$ stands for the coefficient of $h^n$ and $k_{i}$ is the degree of $D_{i}$. For $r=2$,
\[
 d_t(D_1\cup D_2) = d_t(D_1)+d_t(D_2) + q
\]
where
\[
q=
 \begin{cases}
  \displaystyle\frac{k_2(k_1-1)^n - k_1(k_2-1)^n}{k_1-k_2} & \text{if $k_1\neq k_2$}\\[0.4cm]
  (k-1)^n(nk+k-3) & \text{if $k=k_1=k_2$}.
 \end{cases}
\]
\smallskip
Here the correction term is always non-negative.
\end{example}

\begin{example}
\label{hyperplanes}
For a collection of $r$ hyperplanes in $\P^n$ in \emph{general position}, a simple calculation
from \eqref{eq:normalcrossings} yields
\begin{equation}
\label{eq:hyperplanes}
 d_t=
\begin{cases}
 0 & \text{if $r \leq n$}\\
 \binom{r-1}{n} & \text{if $r\geq n+1$}
\end{cases}
\end{equation}
so the polar map is birational if and only if $r=n+1$.
In that case the map is, up to change of coordinates, the \emph{standard Cremona transformation} 
$\phi\colon\mathbb P^{n} \dashrightarrow \mathbb P^{n}$, given as the polar map associated to $f=x_{0}\cdots x_{n}$. Now, identities \eqref{restriction} and \eqref{eq:hyperplanes} together yield expressions for the projective degrees of this map, namely
\[
\textstyle
d_{i}(\phi)= \binom{n}{n-i}
\]
for $i=0,\dots,n-1$. These numbers have also been obtained
by G.~Gonzalez-Sprinberg and I.~Pan
\cite[Thm.~2]{GP} by applying methods of toric geometry.
\end{example}

%

\begin{example}
\label{ex:Bruno}
The first result of the last example holds in more generality. In fact, 
A. Bruno \cite[Thm.~A]{Bruno} proved:
\emph{a collection of \emph{$r$} distinct hyperplanes in $\P^n$ is homaloidal if and only if $r=n+1$
and they are in general position.}
We offer an alternative proof, based
on  formula \eqref{PF2}.

Indeed, if $r\le n+1$ and they are not in general position, then the arrangement is a cone, so the polar degree is zero; and we have seen
in Example~\ref{hyperplanes} that $n+1$ hyperplanes in general
position is homaloidal. So, if $D$ is the union of $r$ hyperplanes, it suffices to show that 
 $d_t(D)\geq 2$ whenever $r\geq n+2$ and $D$ is not a cone.  The conclusion goes by induction on $n$. 
For $n=1$ this is immediate, so we assume $n\geq 2$ and $D$ is not a cone. Let $D'$ be the union of the first $r-1$ hyperplanes and $H_r$ be the last one.
By equation~\eqref{PF2}
\[
d_t(D) = d_t(D') + d_t(H_r) +
(-1)^{n} (\euler(D'\cap H_r \setminus H)-1).
\]
Now, $D'\cap H_r$ is an arrangement of $r-1$ hyperplanes in $H_r$. By equation~\eqref{dt:2},
the correction term in the identity above is exactly the polar degree of this arrangement. Finally, notice that $D'\cap H_r$ is not a cone
because $D$ is not, hence by induction
$d_t(D)\geq d_t(D'\cap H_r)\geq 2$, as wished.
\end{example}

\vspace{1cm}

\font\smallsc=cmcsc9
\font\smallsl=cmsl9

\noindent{\smallsc Universidade Federal Fluminense, Instituto de Matem\'{a}tica, Departamento de An\'{a}lise. \\
Rua M\'{a}rio Santos Braga, s/n, Valonguinho, 24020--140 Niter\'{o}i RJ,
Brazil.}

\vskip0.3cm
{\smallsl E-mail address:
\small\verb?tfassarella@id.uff.br?}

\vskip0.3cm

{\smallsl E-mail address: \small\verb?nivaldo@mat.uff.br?}

\end{document}